\begin{document}

\title[$H$-stability of Syzygy Bundles on Regular
Surfaces]{$H$-stability of Syzygy Bundles on some regular
Algebraic Surfaces }

\author{H. Torres-L\'opez}

\address{CONACyT - U. A. Matem\'aticas, U. Aut\'onoma de
Zacatecas
\newline  Calzada Solidaridad entronque Paseo a la
Bufa, \newline C.P. 98000, Zacatecas, Zac. M\'exico.}

\email{hugo@cimat.mx}

\author{A. G. Zamora}

\address{U. A. Matem\'aticas, U. Aut\'onoma de
Zacatecas
\newline  Calzada Solidaridad entronque Paseo a la
Bufa, \newline C.P. 98000, Zacatecas, Zac. M\'exico.}

\email{alexiszamora06@gmail.com}

\thanks{The first author was partially supported by project FORDECYT 265667. The second author was partially supported
by Conacyt Grant CB 2015-257079} \subjclass[2000]{14J60}
\keywords{Syzygy bundles, $H-$stability, Algebraic Surfaces}

\newtheorem{Theorem}{Theorem}[section]
\newtheorem{Lemma}[Theorem]{Lemma}
\newtheorem{Definition}[Theorem]{Definition}
\newtheorem{Proposition}[Theorem]{Proposition}
\newtheorem{Corollary}[Theorem]{Corollary}
\newtheorem{Remark}[Theorem]{Remark}

\newtheoremstyle{TheoremNum}
        {\topsep}{\topsep}              
        {\itshape}                      
        {}                              
        {\bfseries}                     
        {.}                             
        { }                             
        {\thmname{#1}\thmnote{ \bfseries #3}}
    \theoremstyle{TheoremNum}
    \newtheorem{rtheorem}{Theorem}
    \newtheorem{rlema}{Lemma}
\begin{abstract}

Let $L$ be a globally generated line bundle over a smooth
irreducible complex projective surface $X$. The syzygy bundle
$M_{L}$ is the kernel of the evaluation map $H^0(L)\otimes\mathcal
O_X\to L$. We prove the $L$-stability of $M_L$ for Hirzebruch
surfaces, del Pezzo surfaces and Enriques surfaces. The
$(-K_X)$-stability of syzygy bundles $M_L$ over del Pezzo surfaces
is also obtained.
\end{abstract}

\maketitle

\section{Introduction}

Let $X$ be a smooth irreducible projective variety over
$\mathbb{C}$ and let $L$ be a globally generated line bundle over
$X$ (from now on simply a generated bundle). The kernel $M_L$ of
the evaluation map $H^0(L)\otimes \mathcal{O}_X\rightarrow L$ fits
into the following exact sequence
\begin{equation}\label{dualspam}
 \xymatrix{ 0 \ar[r]^{} &  M_{L} \ar[r]^{}&  H^0(L)\otimes \mathcal{O}_X  \ar[r]^{} &  L \ar[r]^{} & 0.}
\end{equation}

\noindent The bundle $M_{L}$ is called a syzygy bundle.  The rank
of $M_{L}$ is $h^0(L)-1$. The vector bundles $M_L$ have been
extensively studied from different points of view.

When $X$ is a projective irreducible smooth curve of genus $g\geq
1$ L. Ein and R. Lazarsfeld  showed  in \cite{ein} that the syzygy
bundle $M_L$ is stable for $d>2g$ and it is semi-stable for $d=2g$
(see also \cite{butler}). After this, the semi-stability of $M_L$
was proved for line bundles with $\text{deg}(L)\geq
2g-\text{Cliff}(C)$ (see \cite[Corollary 5.4]{mistretastopino} and
\cite[Theorem 1.3]{camerecurvas}). In (\cite{ram}), Paranjape and
Ramanan proved that $M_{K_C}$ is semi-stable and is stable if $C$
is non-hyperelliptic. In \cite{schneider},  Schneider showed that
$M_L$ is semi-stable for a general curve $C$ (see also
\cite{but}). The semi-stability for incomplete linear series  over
general curves  was proved in \cite{leticiaynewstead}.

\vspace{.1cm}

In \cite{flenner}, Flenner showed the stability of $M_L$ for
projective spaces. The stability of syzygy bundles  for incomplete
linear series in projective spaces  has been studied by several
authors (see \cite{coanda}, \cite{rosa}, \cite{brenner}).
Recently, in \cite{CaucciLahoz2020}, the authors proved that given
an Abelian variety $A$ and any ample line bundle $L$ on $A$ the
syzygy bundle $M_{L^d}$ is $L$-stable if $d\ge 2$.

In the case of a projective surface $X$, we must start by fixing a
polarization $H$ and then  ask for the $H$-stability of the syzygy
bundle $M_L$. Recall that $H$-stability for a vector bundle $E$ on
$X$ means that for any sub-bundle $F\subset E$

$$\mu_H(F):= \frac{c_1(F).H}{\text{rk}(F)} < \mu_H(E):=
\frac{c_1(E).H}{\text{rk}(E)}.$$

In the pioneer work of Camere (\cite{camere}) it was proved that
$M_L$ is $L$-stable for any ample and generated $L$ on a $K3$
surface and for any generated $L$ with $L^2\ge 14$ on an abelian
surface $X$. In \cite{lazarsfeldsurfaces} Ein, Lazarsfeld and
Mustopa fixed an ample divisor $L$ and an arbitrary divisor $D$
over $X$, and setting $L_d=dL+D$ ($d\in \mathbb{N}$) showed that
$M_{L_d}$ is $L$-stable for $d>>0$.

Most of our results derive from the following: \vskip0.1cm

\textbf{Theorem 2.2} {\it Let $X$ be a smooth projective surface.
Let $L$ be an ample and generated line bundle over $X$ and $H$
 be a divisor such that an irreducible and non-singular curve  $C$ exists in $|H|$. Assume that
\begin{enumerate}
 \item $h^1(L-H)=0$;
 \item $h^0(H)\geq h^0(L\vert_C)$;
 \item $M_{L\vert_C}$ is semi-stable;
 \end{enumerate}
 Then $M_L$ is  $H$-stable.}

\vskip0.1cm

Section 2 is devoted to the proof of Theorem \ref{Theorem2}.

If $L=H$ and $X$ is regular, then conditions (1) and (2) are
automatically satisfied and in order to prove the stability of
$M_L$ it is sufficient to prove  the semi-stability of
$M_{L\vert_C}$. In section 3, using this idea, we obtain the
$L$-stability of the syzygy bundle $M_L$ in the following cases:
if $\vert L \vert$ contains either a genus $\le 1$ curve or a
Brill--Noether general curve (Corollary \ref{Cor1}), if $X$ is
either a del Pezzo (Corollary \ref{delPezzo}) or a Hirzebruch
surfaces (Corollary \ref{F_n}) and if $X$ is an  Enriques surfaces
under the condition that $\text{Cliff}(C)\geq 2$ (Corollary
\ref{Enriques}).

Moreover, we study the stability of syzygy bundle over del Pezzo
surfaces with respect to  the anti-canonical polarization:

\vskip0.1cm

\textbf{Theorem 3.7} {\it Let $X$ be a del Pezzo surface and let
$L$ be a generated line bundle on $X$. If $L$ contains an
irreducible curve, then the vector bundle $M_L$ is
$(-K_X)$-stable.}

\vskip0.1cm


\vskip0.1cm

\textbf{Conventions:} We work over the field of complex numbers
$\mathbb{C}$.  Given a coherent sheaf $\mathcal{G}$ on a variety
$X$ we write $h^i(\mathcal{G})$ to denote the dimension of the
$i$-th cohomology group $H^i(X,\mathcal{G})$.  The sheaf $K_X$
will denote the canonical sheaf on $X$. A surface always means a
smooth (or non-singular) projective complex irreducible surface.

\section{Stability on some regular surfaces}

The aim of this section is to prove Theorem \ref{Theorem2}.

Let $X$ be a projective, irreducible and non-singular surface $X$.
Let $H$ be an ample line bundle on $X$. The $H$-slope of a vector
bundle  $E$ is given by
\begin{eqnarray*}
\mu_H(E):=\frac{c_1(E).H}{\text{rk}(E)},
\end{eqnarray*}
where $c_1(E)$ is the first Chern class of $E$ and $\text{rk}(E)$
is the rank of $E$. In particular, the $H$-slope of $M_L$ is given
by
\begin{eqnarray*}
\mu_H(M_L)=\frac{c_1(M_L).H}{\text{rk}(M_L)}=\frac{-L.H}{h^0(L)-1}.
\end{eqnarray*}
Sometimes we write $\mu(E)$ by $\mu_H(E)$, when there is not
confusion about the choice of $H$. Note that if $C\in \vert H
\vert $ is non-singular projective and irreducible, we can compute
the $H$-slope of a vector bundle $E$ as
\begin{eqnarray*}
\mu_H(E)=\mu(E\vert_C):=\frac{\text{deg}(E\vert_C)}{\text{rk}(E)},
\end{eqnarray*}
where $\text{deg}(E\vert_C)$ is the degree of vector bundle $E$
restricted to the curve $C$.

Assume that there exists  an irreducible and non-singular curve
$C$ in the linear system $\vert H \vert$ and take a point $x\in C$. For
the remainder of the argument $H$, $C \in \vert H \vert$ and $x\in C$ would be fixed. Note that
$C\in \vert H\otimes m_x\vert$.

 Given any sub-bundle
$F\subset M_{L}$ we have, thanks to the condition
$h^1(L-H)=0$, a commutative diagram:

\begin{equation} \label{eq:ES}
\xymatrix{0  \ar[r]  & K \ar[r] \ar@{^{(}->}[d]      &  F|_C \ar[r] \ar@{^{(}->}[d] & N \ar[r] \ar@{^{(}->}[d]^{}  & 0\\
                0\ar[r]^{}  & H^0(L-H)\otimes \mathcal{O}_C \ar[r]_{}& (M_{L})|_C \ar[r]_{}   & M_{L\vert_C} \ar[r]^{} & 0    ,\\
                } \end{equation}
(see \cite{lazarsfeldsurfaces} page 76 and also \cite{camere},
proof of Proposition 3).

\begin{Lemma}\label{MainTheorem} Let $X$ be a smooth projective surface. Let $L$ be an ample and generated line bundle over $X$ and
$H$ be a divisor such that an irreducible and non-singular curve
$C$ exists in $\vert H \vert$.  Assume that the following
statements are satisfied:
\begin{enumerate}
 \item $h^1(L-H)=0$.
 \item $M_{L\vert_C}$ is semi-stable. \end{enumerate}

 Let $F\subset M_L$ be a sub-bundle with $\mu_H(F)\geq \mu_H(M_L)$.
Then,

$$ \text{rk}(F)\geq h^0(H)-1 + \text{rk}(K).$$

\end{Lemma}

The proof of the Lemma is quite analogous to the one of Lemma 1.1
in \cite{lazarsfeldsurfaces}. We only need to replace $B$ by $L-H$
and $L_d-B$ by $H$ and to compute the dimension of a fiber. We
include a proof for the reader convenience.

\begin{proof}

Note that for any vector bundle $E$ on $X$,
$\mu_H(E)=\mu(E\vert_C)$. Therefore the condition (3) is equivalent to $\mu(F\vert_C)\ge \mu
((M_{L})\vert_C)$. If $K=0$ in the exact sequence \eqref{eq:ES}, then
\begin{eqnarray*}
\mu(N)=\mu(F\vert_C)\geq
\mu((M_{L})\vert_C)>\mu(M_{L\vert_C})
\end{eqnarray*}
gives a contradiction with the semi-stability of $M_{L\vert_C}$
and thus we have  $K\neq 0$.

The multiplication map of sections
\begin{eqnarray*}
\nu:  \mathbb{P}(H^0(H\otimes m_x)) \times
\mathbb{P}(H^0(L-H))\rightarrow \mathbb{P}(H^0(L\otimes m_x))
\end{eqnarray*}
is a  finite morphism. After localizing at the given point $x$ we
obtain a commutative diagram:
\begin{equation*}
\xymatrix{  & {(K)}_x  \ar@{^{(}->}[r] \ar@{^{(}->}[d]      &  (F|_C)_{x}  \ar@{^{(}->}[d] &   & \\
                    & H^0(L-H) \ar@{^{(}->}[r]& {((M_{L})\vert_C)}_x=H^0(L\otimes m_x).    &  &    ,\\  }
\end{equation*}

Let $Z= \nu^{-1}(\mathbb{P} (F|_C)_x)$, since $\nu$ is finite
$\dim \mathbb{P}((F|_C)_{x}) \ge \dim Z$. Given $s\in H^0(
H\otimes m_x)$, we have that $s$ induces the injective morphism
\begin{eqnarray*}
H^0(L-H) &\rightarrow &  H^0(L\otimes m_x ) \\
\phi &\mapsto & s\otimes \phi.
\end{eqnarray*}
Therefore for any $\phi$ in the image of the morphism $(K)_x \to
H^0(L-H)$, we have that $(s,\phi)\in Z$ and $\pi_2(s,\phi)=s$. It
follows  that the projection
\begin{eqnarray*}
\pi_2:Z\rightarrow \mathbb{P}(H^0(H\otimes m_x))
\end{eqnarray*}
is dominant and the dimension of the general fibre is greater or
equal than $\text{rk}(K)-1$. Hence
\begin{eqnarray}\label{cotaF}
\text{rk}(F)\geq h^0(H)+\text{rk}(K)-1.
\end{eqnarray}

\end{proof}

\begin{Theorem}\label{Theorem2}
 Let $X$ be a smooth projective complex surface. Let $L$ be an ample and generated line bundle over $X$ and $H$ be a divisor such that
 an irreducible and non-singular curve $C$ exists in $\vert H
 \vert$. Assume that:
\begin{enumerate}
 \item $h^1(L-H)=0$,
 \item $h^0(H)\geq h^0(L\vert_C)$,
 \item $M_{L\vert_C}$ is semi-stable.
 \end{enumerate}
 Then $M_L$ is $H$-stable.
 \end{Theorem}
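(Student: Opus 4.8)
The plan is to re-run the argument of Theorem \ref{Main Theorem} with $L$ in place of $L_n$, noting that hypotheses (1)--(3) provide exactly what was obtained there by taking $n$ large, so that no asymptotics are needed. First I would fix the irreducible non-singular curve $C\in\vert H\vert$ of the statement and argue by contradiction: let $F\subset M_L$ be a destabilizing subbundle, i.e. $0<\text{rk}(F)<\text{rk}(M_L)$ and $\mu_H(F)\geq\mu_H(M_L)$. Restricting the defining sequence \eqref{dualspam} of $M_L$ to $C$ and comparing it with the syzygy sequence of $L\vert_C$ on $C$: since $h^1(L-H)=0$ by (1), the map $H^0(L)\to H^0(L\vert_C)$ is onto with kernel $H^0(L-H)$, and exactly as for \eqref{eq:ES} one gets a commutative diagram with exact rows whose bottom row is $0\to H^0(L-H)\otimes\mathcal{O}_C\to M_L\vert_C\to M_{L\vert_C}\to 0$ and whose top row $0\to K\to F\vert_C\to N\to 0$ has $K:=F\vert_C\cap(H^0(L-H)\otimes\mathcal{O}_C)$ and $N$ the image of $F\vert_C$ in $M_{L\vert_C}$. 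Since $\mu_H(E)=\mu(E\vert_C)$ for every bundle $E$ on $X$, the hypothesis on $F$ reads $\mu(F\vert_C)\geq\mu(M_L\vert_C)$.

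The next step is to invoke Lemma \ref{cotaminimaF}, whose proof carries over verbatim (the finite multiplication morphism $\vert H\otimes m_x\vert\times\vert L-H\vert\to\vert L\otimes m_x\vert$ and the semistability of $M_{L\vert_C}$ from (3) play the roles they did for $L_n$): it gives $\text{rk}(F)\geq h^0(H)-1+\text{rk}(K)$, hence $\text{rk}(N)\geq h^0(H)-1$. On the other hand $N$ is a subsheaf of $M_{L\vert_C}$, so $\text{rk}(N)\leq h^0(L\vert_C)-1$; combined with hypothesis (2), $h^0(L\vert_C)-1\leq h^0(H)-1$, this forces
$$\text{rk}(N)=h^0(L\vert_C)-1=h^0(H)-1.$$
Thus $N$ has full rank in $M_{L\vert_C}$, whence $\deg N\leq\deg M_{L\vert_C}=-L.H$, while $\deg K\leq 0$ because $K$ is a subsheaf of a trivial bundle on $C$. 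Since $F\subsetneq M_L$ is proper, $\text{rk}(F)=\text{rk}(K)+\text{rk}(N)<\text{rk}(M_L)=h^0(L)-1$, and therefore
$$\mu(F\vert_C)=\frac{\deg K+\deg N}{\text{rk}(F)}\leq\frac{-L.H}{\text{rk}(F)}<\frac{-L.H}{h^0(L)-1}=\mu(M_L\vert_C),$$
contradicting $\mu(F\vert_C)\geq\mu(M_L\vert_C)$. Hence $M_L$ has no destabilizing subbundle and is $H$-stable.

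I expect the point requiring most care to be the transfer of Lemma \ref{cotaminimaF} to this non-asymptotic setting, together with the degenerate case $h^0(L-H)=0$: there the bottom row above collapses, $K=0$ automatically, $M_L\vert_C\cong M_{L\vert_C}$, and the conclusion can no longer be read off from mere semistability of $M_{L\vert_C}$, so this case would have to be treated separately. In all the situations where the theorem is applied in Section 3, however, one has $L=H$ on a regular surface, so $h^0(L-H)=h^0(\mathcal{O}_X)=1$ and the degenerate case does not occur. Apart from that, the argument is the same degree-and-rank bookkeeping as in Section 2; the only genuinely new observation is that hypothesis (2), fed into the rank bound of Lemma \ref{cotaminimaF}, pins $\text{rk}(N)$ down to be exactly $\text{rk}(M_{L\vert_C})$, which is what makes the final slope inequality strict and lets us dispense with the growth estimates used in the proof of Theorem \ref{Main Theorem}.
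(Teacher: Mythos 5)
Your argument is correct, and it follows the paper's proof up to the same point: the restriction diagram coming from hypothesis (1) and the transfer of Lemma \ref{cotaminimaF} to $n=1$ (the paper does exactly this). Where you diverge is the endgame. The paper additionally imports the analogue of Lemma 1.2 of \cite{lazarsfeldsurfaces}, namely $\text{rk}(K)\ge h^0(L-H)\,\text{rk}(F)/\text{rk}(M_L)$, plugs it into the bound of Lemma \ref{cotaminimaF}, solves the resulting inequality to get $\text{rk}(F)\ge (h^0(H)-1)\text{rk}(M_L)/(\text{rk}(M_L)-h^0(L-H))$, and then uses $\text{rk}(M_L)-h^0(L-H)=h^0(L\vert_C)-1$ together with (2) to force $\text{rk}(F)\ge\text{rk}(M_L)$, a rank contradiction. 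You avoid that second lemma entirely: from $\text{rk}(N)\ge h^0(H)-1$ and $\text{rk}(N)\le h^0(L\vert_C)-1\le h^0(H)-1$ you squeeze $N$ to full rank in $M_{L\vert_C}$, and then the elementary bounds $\deg K\le 0$, $\deg N\le \deg M_{L\vert_C}=-L.H$ together with $\text{rk}(F)<h^0(L)-1$ give the strict slope inequality $\mu(F\vert_C)<\mu(M_L\vert_C)$ directly. Your route is thus a bit more self-contained (only Lemma \ref{cotaminimaF} is needed, and the contradiction is a slope inequality rather than a rank overflow), while the paper's route stays closer to the bookkeeping of \cite{lazarsfeldsurfaces}; both use (3) only through the lemma. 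Your caveat about the degenerate case $h^0(L-H)=0$ is well taken, but note it is not specific to your argument: the paper's proof also rests on Lemma \ref{cotaminimaF}, whose proof of $K\neq 0$ uses the strict gap $\mu(M_L\vert_C)>\mu(M_{L\vert_C})$ and hence also needs $h^0(L-H)>0$; as you observe, in all the applications of Section 3 one has $L=H$ on a regular surface, so $h^0(L-H)=1$ and the issue does not arise.
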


 \begin{proof}
Assume that $M_L$ is $H$-unstable, let $F\subset M_L$ be a sub-bundle
such that $\mu_H(F)\geq \mu_H(M_L)$. By Lemma \ref{MainTheorem}, we have
\begin{eqnarray*}
\text{rk}(F)\geq h^0(H)+\text{rk}(K)-1.
\end{eqnarray*}
We can repeat the proof of Lemma 1.2 in \cite{lazarsfeldsurfaces}
replacing $B$ by $L-H$ and $L_d-B$ by $H$. More specifically, in
Lemma 1.2 in \cite{lazarsfeldsurfaces} an inequality (inequality
(*) at the beginning of page 78) is obtained that translated to
our situation, just as we did with Lemma \ref{MainTheorem},
yields:

 $$\text{rk}(K)\ge h^0(L-H)\cdot\left(\frac{\text{rk} (F)}{\text{rk}(M_L)}\right).$$

 Therefore,
 we have:
\begin{eqnarray*}
\text{rk}(F)&\geq& h^0(H)+\text{rk}(K)-1\\
&\geq & h^0(H)+ h^0(L-H)\cdot
\left(\frac{\text{rk}(F)}{\text{rk}(M_{L})}\right)-1.
\end{eqnarray*}
That is,
\begin{eqnarray*}
\text{rk}(F)\geq
\frac{(h^0(H)-1)\text{rk}(M_L)}{\text{rk}(M_L)-h^0(L-H)}.
\end{eqnarray*}
Since $\text{rk}(M_L)-h^0(L-H)=\text{rk}(M_L\vert_C)=h^0(L\vert_C)-1$,
from the hypothesis $(2)$, we obtain that $\text{rk}(F)\geq
\text{rk}(M_L)$ which is impossible because $F$ is a sub-bundle of
$M_L$. This proves the theorem.
 \end{proof}

\section{Applications to regular surfaces}

Theorem \ref{Theorem2} is especially useful when $X$ is a regular
surface and $L=H$. In this case conditions $(1)$ and $(2)$ are
automatically satisfied and only $(3)$ remains to be verified.
Also, as $H=L$ is ample and generated we can assume that
$h^0(L)\ge 3$. Thus, a curve $C\in \vert L \vert$ exists which is
non-singular and connected, and therefore irreducible (Bertini's
Theorem).

\begin{Corollary}\label{Cor1} Assume $X$ is regular and $L$ is an ample
and generated line bundle on $X$. Let $C\in \vert L \vert$ be
irreducible and non-singular and assume that either:
\begin{enumerate}
\item  $g(C)\le 1$ or
\item $C$ is Brill--Noether general,
\end{enumerate}
then $M_L$ is $L$-stable. \end{Corollary}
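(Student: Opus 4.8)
The plan is to apply Theorem~\ref{Theorem2} with the polarization $H=L$, so that only its condition (3), the semistability of $M_{L\vert_C}$, has to be checked in each of the two cases; conditions (1) and (2) will be automatic.

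First I would record why (1) and (2) hold. Since $C\in\vert L\vert$, the ideal sheaf of $C$ is $L^{-1}$, so twisting $0\to\mathcal O_X(-C)\to\mathcal O_X\to\mathcal O_C\to0$ by $L$ gives
\begin{equation*}
0\longrightarrow\mathcal O_X\longrightarrow L\longrightarrow L\vert_C\longrightarrow0 .
\end{equation*}
Because $X$ is regular, $h^1(\mathcal O_X)=0$, and the long exact cohomology sequence yields both $h^1(L-L)=h^1(\mathcal O_X)=0$ (condition (1)) and $h^0(L)=h^0(L\vert_C)+1>h^0(L\vert_C)$ (condition (2)). By Bertini, $C$ may be taken smooth. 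Thus everything reduces to checking condition (3).

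For case (1) this is elementary. By adjunction, $\deg L\vert_C=L^2=2g(C)-2-L\cdot K_X$, and $L^2>0$ as $L$ is ample. If $g(C)=0$ then $C\cong\mathbb P^1$ and $M_{L\vert_C}$ is a direct sum of copies of $\mathcal O_{\mathbb P^1}(-1)$, hence semistable. If $g(C)=1$ then $L\vert_C$ is a globally generated line bundle on an elliptic curve, so it has degree $\ge2$ (degree $1$ would force $C\cong\mathbb P^1$), whence $\deg L\vert_C\ge2=2g(C)$ and $M_{L\vert_C}$ is semistable by \cite{ein}. For case (2) I would use that $L\vert_C$ is globally generated (being the restriction of a generated line bundle) and that a Brill--Noether general curve has maximal Clifford index $\lfloor(g(C)-1)/2\rfloor$; the semistability of $M_{L\vert_C}$ then follows from the known semistability statements for syzygy bundles on curves --- the Clifford bound of \cite{camerecurvas}, \cite{mistrettastopino}, or the general-curve results \cite{schneider}, \cite{leticiaynewstead}. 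In either case (3) is verified, and Theorem~\ref{Theorem2} gives the $L$-stability of $M_L$.

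The delicate step is case (2): one has to make sure that the hypotheses of the invoked curve-level theorem are actually met by our curve $C$, i.e., compare $\deg L\vert_C=L^2$, read off from adjunction, with the Clifford threshold $2g(C)-\lfloor(g(C)-1)/2\rfloor$, and when the degree falls short, appeal instead to the unconditional semistability of syzygy bundles over a general curve. The genus $\le1$ case, by contrast, requires nothing past the structure sequence above together with the classical description of $M_L$ on $\mathbb P^1$ and on elliptic curves.
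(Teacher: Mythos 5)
Your proposal is correct and follows essentially the same route as the paper: apply Theorem~\ref{Theorem2} with $H=L$, note that regularity of $X$ makes conditions (1) and (2) automatic, and verify semistability of $M_{L\vert_C}$ case by case (genus $0$: $M_{L\vert_C}\simeq\oplus\,\mathcal O_{\mathbb P^1}(-1)$; Brill--Noether general: Schneider's result \cite{schneider}). The only divergence is the elliptic case, where the paper proves stability of $M_{L\vert_C}$ directly by a Riemann--Roch argument on globally generated quotients of $M_{L\vert_C}^{\vee}$, whereas you observe $\deg L\vert_C\ge 2=2g$ and invoke \cite{ein} --- a criterion the paper itself uses in Section 2, so this is a legitimate (and shorter) substitute.
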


\begin{proof}
Assume that $g(C)=0$. First note that  $h^1(L\vert_C)=0$ because
$L|_C$ is a globally generated  bundle over $C$. Taking cohomology
in the exact sequence:
$$0\to M_{L\vert_C} \to H^0(L\vert_C)\otimes \mathcal{O}_C \to L\vert_C \to
0,$$  we see that $h^0( M_{L\vert_C})=h^1( M_{L\vert_C})=0$. By
Grothendieck's Theorem, we get $M_{L\vert_C} \simeq \oplus
\mathcal{O}_C(-1)$. Therefore, $M_{L\vert_C}$ is semi-stable and
the result follows at once from Theorem \ref{Theorem2}.

Next, let $C$ be an elliptic non-singular curve  and let $L$ be a
line bundle on $C$ of degree $d\geq 2$, observe that $M_L$ is
stable. Indeed, the rank of $M_L^{\vee}$ is $d-1$, therefore the
slope of $M_L^{\vee}$ is given by
\begin{eqnarray*}
\mu(M_L^{\vee})=\frac{d}{d-1}.
\end{eqnarray*}

Let $F$ be a quotient of $M_L^{\vee}$, we want to prove that
$\mu(M_L^{\vee})<\mu(F).$ Since $F^{\vee}$ is a sub-bundle of
$M_L$, we get that $H^0(F^{\vee})=0$ and thus $H^1(F)=0$, by Serre
duality. By Riemann Roch Theorem, we get
\begin{eqnarray*}
h^0(F)= \text{deg}(F) + \text{rk}(F)(1-g(C))+h^1(F)=\text{deg}(F).
\end{eqnarray*}

Since $F$ is globally generated over $C$, it follows that
$h^0(F)\geq \text{rk}(F)$. Assume that $h^0(F)=\text{rk}(F)$, then
the evaluation map
 \begin{eqnarray*}
 H^0(F)\otimes \mathcal{O}_C \to F
 \end{eqnarray*}
is an isomorphism, which is impossible because $H^1(F)=0.$
Therefore $\text{deg}(F)\geq \text{rk}(F)+1$ and
\begin{eqnarray*}
\mu(F)\geq
\frac{\text{rk}(F)+1}{\text{rk}(F)}>\frac{d}{d-1}=\mu(M_L^{\vee}),
\end{eqnarray*}
the last inequality following from $\text{rk}(F)<d-1.$

Finally, assume that $C$ is Brill--Noether general. By
\cite{schneider} we have that $M_{L\vert_C}$ is semi-stable.
\end{proof}

Another general case that can be treated is when the anticanonical
divisor $-K_X$ is nef.

\begin{Proposition}\label{qigual0}
Let $X$ be a regular surface and let  $L$ be an ample and
generated line bundle on $X$. Let $C\in |L|$ be irreducible and
non-singular. Then:
\begin{enumerate}

   \item if $-L.K_X\ge 2$, then $M_L$ is $L$-stable.
    \item if $-L.K_X= 1$ and $\text{Cliff}(C)\geq 1$; then $M_L$ is $L$-stable.
    \item if $L.K_X= 0$ and $\text{Cliff}(C)\geq 2$; then $M_L$ is $L$-stable.
\end{enumerate}
\end{Proposition}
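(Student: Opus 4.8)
The plan is to reduce all three cases to Theorem \ref{Theorem2} applied with $H=L$. Since $X$ is regular, $h^1(L-H)=h^1(\mathcal{O}_X)=0$, so hypothesis $(1)$ holds automatically; and from the exact sequence $0\to\mathcal{O}_X\to L\to L\vert_C\to 0$ attached to $C\in\vert L\vert$, together with $h^1(\mathcal{O}_X)=0$, one gets $h^0(L)=h^0(L\vert_C)+1$, so hypothesis $(2)$ holds (with strict inequality). As $L$ is ample and generated, Bertini lets us take the given irreducible $C\in\vert L\vert$ to be non-singular as well. Hence in each case the only thing left to verify is hypothesis $(3)$: the semi-stability of $M_{L\vert_C}$ on $C$.

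First I would pin down $\deg(L\vert_C)$. Since $C\in\vert L\vert$, adjunction gives $2g(C)-2=C.(C+K_X)=L^2+L.K_X$, so $\deg(L\vert_C)=L^2=2g(C)-2-L.K_X$. Note $L^2>0$ because $L$ is ample, which in particular forces $g(C)\ge 1$ in case $(2)$ and $g(C)\ge 2$ in case $(3)$. Now the three numerical hypotheses are exactly what is needed: in case $(1)$, $-L.K_X\ge 2$ yields $\deg(L\vert_C)\ge 2g(C)$, so $M_{L\vert_C}$ is semi-stable (indeed stable if the inequality is strict) by \cite{ein}; in case $(2)$, $-L.K_X=1$ yields $\deg(L\vert_C)=2g(C)-1$, which is $\ge 2g(C)-\text{cliff}(C)$ precisely when $\text{cliff}(C)\ge 1$; and in case $(3)$, $L.K_X=0$ yields $\deg(L\vert_C)=2g(C)-2$, which is $\ge 2g(C)-\text{cliff}(C)$ precisely when $\text{cliff}(C)\ge 2$. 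In cases $(2)$ and $(3)$ semi-stability of $M_{L\vert_C}$ then follows from the Clifford-index criterion of \cite[Corollary 5.4]{mistretastopino} (see also \cite[Theorem 1.3]{camerecurvas}). In all three cases hypothesis $(3)$ of Theorem \ref{Theorem2} is satisfied, and the $L$-stability of $M_L$ follows.

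I do not expect a serious obstacle here: the argument is essentially the adjunction bookkeeping above, arranged so that the thresholds on $L.K_X$ match the known semi-stability thresholds on $\deg(L\vert_C)$. The only point requiring care is the low-genus behaviour: when $g(C)\le 1$ the bundle $M_{L\vert_C}$ can have rank $0$ or $1$ and the quoted theorems are phrased for $g(C)\ge 1$ (respectively $g(C)\ge 2$ for a meaningful Clifford index), but such situations are either vacuous for semi-stability or already covered by Corollary \ref{Cor1}, so they can be dispatched by a one-line remark rather than a separate argument.
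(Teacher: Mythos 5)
Your proposal is correct and follows essentially the same route as the paper: take $H=L$ in Theorem \ref{Theorem2} (whose hypotheses (1) and (2) hold automatically for a regular surface, as the paper notes), use adjunction to get $\deg(L\vert_C)=L^2=2g(C)-2-L.K$, and invoke the $\deg\ge 2g$ criterion of \cite{ein} in case (1) and the Clifford-index criterion of \cite{mistretastopino}, \cite{camerecurvas} in cases (2) and (3), with the low-genus situations dispatched via Corollary \ref{Cor1}. Your explicit verification of hypotheses (1) and (2) and of the genus bounds is only slightly more detailed than the paper's, which states these points without computation.
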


\begin{proof}

If $g(C)\le 1$, then apply Corollary \ref{Cor1}. If $-L.K_X\geq
2$, then by the Adjunction Formula we have $\deg (L\vert_C)=L^2\ge
2g(C)$. Therefore $M_{L\vert_C}$ is semi-stable and $M_L$ is
$L$-stable by Theorem \ref{Theorem2}. This proves (1). The proofs
of (2) and (3) are similar: using Adjunction Formula we get $\deg
(L\vert_C)=L^2\geq 2g(C)-\text{Cliff}(C),$ therefore $M_L\vert_C$
is semi-stable.

\end{proof}

Some particular cases of this situation are:

\begin{Corollary}\label{delPezzo}
Let $L$ be an ample and generated line bundle over a del Pezzo
surface, then $M_L$ is $L$-stable.
\end{Corollary}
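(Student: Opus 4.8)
The plan is to deduce the Corollary from the two results just established, Proposition \ref{qigual0}(1) and Corollary \ref{Cor1}, using the two characteristic properties of a del Pezzo surface $X$: it is regular (so that Proposition \ref{qigual0} applies), and $-K_X$ is ample. Since $L$ is globally generated, by Bertini's theorem I may assume the given irreducible curve $C\in|L|$ is smooth as well, which is what is needed to invoke either result.

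First I would note that, $-K_X$ and $L$ both being ample, the intersection number $-L\cdot K_X$ is a strictly positive integer; the real question is whether it can equal $1$. To control this I would apply the Hodge index theorem to the ample class $-K_X$: it gives $(L\cdot(-K_X))^2\ge L^2\,(-K_X)^2$, with equality only when $L$ is numerically proportional to $-K_X$. As $L^2\ge 1$ and $(-K_X)^2\ge 1$, the case $-L\cdot K_X=1$ would force $L^2=(-K_X)^2=1$ together with equality in Hodge's inequality, hence $L\equiv -K_X$ numerically (and in fact $L\cong -K_X$, since on a del Pezzo surface the Picard group is torsion-free, so numerical equivalence is linear equivalence).

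The proof then splits into two cases. If $-L\cdot K_X\ge 2$, then by the Adjunction Formula $\deg(L|_C)=L^2=2g(C)-2+(-L\cdot K_X)\ge 2g(C)$, so $M_{L|_C}$ is semi-stable and $M_L$ is $L$-stable by Proposition \ref{qigual0}(1). If instead $-L\cdot K_X=1$, then $L\equiv -K_X$ and the Adjunction Formula gives $2g(C)-2=L^2+L\cdot K_X=1-1=0$, so $C$ is a smooth elliptic curve and $M_L$ is $L$-stable by Corollary \ref{Cor1}. In both cases we conclude.

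I do not anticipate a genuine obstacle here. The only point requiring care is checking that the degenerate case $-L\cdot K_X=1$ really does collapse to genus one, which is exactly the content of the Hodge index step; one should also record that del Pezzo surfaces are regular, which follows from Kodaira vanishing together with Serre duality since $-K_X$ is ample (giving $h^1(\mathcal{O}_X)=h^2(\mathcal{O}_X)=0$).
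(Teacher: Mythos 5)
Your proposal is correct, and it uses the same two ingredients as the paper (Corollary \ref{Cor1} for the low-genus case and Proposition \ref{qigual0}(1) when $-L\cdot K_X\ge 2$), but the pivotal numerical step is justified along a different route. The paper splits on the genus of $C$: if $g(C)\le 1$ it invokes Corollary \ref{Cor1}, and otherwise it asserts that $h^0(-K_X)\ge 2$ ``clearly'' forces $-L\cdot K_X\ge 2$ (the implicit argument being that if $-L\cdot K_X=1$ and $g(C)\ge 2$, restricting the anticanonical system to $C$ gives a contradiction, since the restriction map is injective and a degree-one line bundle on a curve of positive genus has at most one section). You instead split on the value of $-L\cdot K_X$ and dispose of the boundary case $-L\cdot K_X=1$ by the Hodge index inequality $(L\cdot K_X)^2\ge L^2K_X^2$, which pins down $L^2=K_X^2=1$ and hence, via adjunction, $g(C)=1$, putting you back into Corollary \ref{Cor1}; you also correctly record the regularity of $X$ (Kodaira vanishing plus Serre duality) and the Bertini reduction to a smooth $C$, both of which the paper uses as well. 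Your Hodge index argument is arguably the cleaner one: it supplies exactly the detail the paper compresses into ``clearly,'' and it makes transparent why the only problematic configuration is $L$ numerically anticanonical on a degree-one del Pezzo, which lands in the genus-one case rather than causing trouble. (The numerical proportionality $L\equiv -K_X$ and the identification $L\cong -K_X$ are not actually needed; $L^2=1$ and $L\cdot K_X=-1$ already give $g(C)=1$ by adjunction, as you in effect use.)
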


\begin{proof} If $C\in \vert L \vert$ is irreducible and
non-singular and $g(C)\le 1$, then the result follows from
Corollary \ref{Cor1}. Otherwise, from $h^0(-K_X)\ge 2$ we have
$-L.K_X\ge 2$ and the result follows from Proposition
\ref{qigual0}. \end{proof}

\begin{Corollary}\label{F_n}
Let $\mathbb{F}_n$ be a non-singular Hirzebruch surface and let
$L$ be an ample and generated line bundle. Then $M_L$ is
$L$-stable.
\end{Corollary}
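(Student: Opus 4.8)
The plan is to deduce the statement from part (1) of Proposition \ref{qigual0}. Since a Hirzebruch surface $\mathbb{F}_n$ is rational it is regular, so it suffices to check that the linear system $|L|$ contains an irreducible curve and that $-L\cdot K_{\mathbb{F}_n}\ge 2$.

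First I would fix the standard generators of the Picard group of $\mathbb{F}_n$: the class $f$ of a fibre of the ruling $\mathbb{F}_n\to\mathbb{P}^1$ and the section $C_0$ with $C_0^2=-n$, so that $f^2=0$, $C_0\cdot f=1$ and $K_{\mathbb{F}_n}=-2C_0-(n+2)f$. Writing $L=aC_0+bf$, the hypothesis that $L$ is ample is equivalent, by the Nakai--Moishezon criterion, to $a\ge 1$ and $b\ge an+1$; note that ampleness then already implies that $L$ is globally generated, so the extra hypothesis in the statement imposes no further restriction. For the existence of an irreducible curve in $|L|$ I would invoke the classical fact that on a Hirzebruch surface every ample line bundle is very ample; hence $\phi_{|L|}$ embeds $\mathbb{F}_n$ into a projective space, and a general member of $|L|$ is then a smooth irreducible curve by Bertini's theorem. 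This is essentially the only delicate point, so I expect it to be the main (though mild) obstacle; a more hands-on alternative would be to produce an irreducible member of $|aC_0+bf|$ directly from the description of that linear system, but appealing to very ampleness is cleaner.

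Finally I would compute
$$-L\cdot K_{\mathbb{F}_n}=(aC_0+bf)\cdot(2C_0+(n+2)f)=a(2-n)+2b,$$
and substitute $b\ge an+1$ to get $-L\cdot K_{\mathbb{F}_n}\ge a(n+2)+2\ge 4$. Thus $\mathbb{F}_n$ is regular, $|L|$ contains a smooth irreducible curve, and $-L\cdot K_{\mathbb{F}_n}\ge 2$, so Proposition \ref{qigual0}(1) applies and $M_L$ is $L$-stable.
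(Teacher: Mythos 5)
Your proof is correct and takes essentially the same route as the paper: write $L=aC_0+bf$, use the numerical ampleness criterion $a>0$, $b>an$, compute $-L\cdot K_{\mathbb{F}_n}=2a-an+2b\ge 4\ge 2$, and conclude by Proposition \ref{qigual0}(1). The only difference is that you also justify the existence of a smooth irreducible member of $|L|$ (ample implies very ample on $\mathbb{F}_n$, then Bertini), a hypothesis of the Proposition that the paper's own proof leaves implicit.
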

\begin{proof}
The canonical line bundle is given by $K_{\mathbb{F}_n}=
-2C_n-(n+2)F,$ where $C_n$ and $F$ are respectively the section
and the fiber of the structural fibration $\mathbb{F}_n \to
\mathbb{P}^1$. If $L$ is ample, then $c_1(L)=aC_n+bF$ with $a>0$
and $b>na.$ Since $-L.K_{\mathbb{F}_n}=2b-an+2a>3$, it follows
that $M_L$ is $L$-stable by Proposition \ref{qigual0}.
\end{proof}

The $L$-stability of $M_L$ for $X$ a $K3$ surface was studied in
(\cite{camere}, Theorem 1). As a byproduct of our method we can
recover the quoted result:

\begin{Corollary}
 Let $X$ be a smooth projective $K3$ surface and $L$ an ample and generated line bundle over $X$.
 Then $M_L$ is $L$-stable.
\end{Corollary}
\begin{proof} Let $C\in \vert L \vert$ be irreducible and
non-singular. By Adjunction Formula, $L\vert_C=K_C$ is the
canonical line bundle. In (\cite{ram}), Paranjape and Ramanan
proved that $M_{K_C}$ is semi-stable. The Corollary follows from
Theorem \ref{Theorem2}.
\end{proof}

Similar results can be obtained for regular surfaces with
numerically trivial canonical divisor:

\begin{Corollary}\label{Enriques}
Let $X$ be an Enriques surface.  Let $L$ be an ample and generated
line bundle on $X$. Assume that an irreducible and non-singular
curve $C$ in the linear system $|L|$ exists such that
$\text{Cliff}(C)\geq 2$. Then $M_L$ is $L$-stable.
\end{Corollary}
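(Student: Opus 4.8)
The plan is to deduce the statement directly from Proposition \ref{qigual0}(3), so that the whole task reduces to checking that an Enriques surface satisfies its hypotheses. I would invoke two standard facts from the classification of Enriques surfaces: first, $h^1(\mathcal{O}_X)=0$, so $X$ is regular; second, the canonical bundle $K_X$ is $2$-torsion in $\mathrm{Pic}(X)$, hence numerically trivial, so that $L.K_X=0$ for any line bundle $L$. With these in hand, the ample and generated line bundle $L$, together with the irreducible curve $C\in|L|$ with $\text{cliff}(C)\ge 2$ provided by hypothesis, fits exactly the conditions of case (3) of Proposition \ref{qigual0}, and we conclude that $M_L$ is $L$-stable.

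For completeness I would also trace through why the machinery behind Proposition \ref{qigual0} applies here with $H=L$. By Bertini's theorem we may assume $C$ is non-singular, since $L$ is generated. Regularity forces $h^1(L-H)=h^1(\mathcal{O}_X)=0$, which is condition (1) of Theorem \ref{Theorem2}, and the exact sequence $0\to\mathcal{O}_X\to L\to L|_C\to 0$ together with $h^1(\mathcal{O}_X)=0$ gives $h^0(L)=h^0(L|_C)+1$, so condition (2) holds as well. For condition (3), the adjunction formula gives $2g(C)-2=L.(L+K_X)=L^2$ because $L.K_X=0$, hence $\deg(L|_C)=L^2=2g(C)-2\ge 2g(C)-\text{cliff}(C)$; by the semi-stability results for syzygy bundles on curves with $\deg\ge 2g-\text{cliff}$ cited in the introduction, $M_{L|_C}$ is semi-stable. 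Theorem \ref{Theorem2} then yields the $L$-stability of $M_L$.

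I do not expect any genuine obstacle here: the corollary is essentially a specialization of Proposition \ref{qigual0}, and the only input particular to Enriques surfaces is the vanishing $L.K_X=0$, which is immediate from $2K_X\sim 0$. The Clifford-index hypothesis $\text{cliff}(C)\ge 2$ is precisely what compensates the $-2$ coming from $2g(C)-2$ in the adjunction computation; it plays the same role here that the positive value of $-L.K$ played in the del Pezzo and Hirzebruch cases.
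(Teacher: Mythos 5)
Your proof is correct and follows essentially the same route as the paper: the paper's own (terse) argument is exactly the observation that numerical triviality of $K_X$ gives $\deg(L|_C)=L^2=2g-2\geq 2g-\text{cliff}(C)$, hence $M_{L\vert_C}$ is semi-stable, and then Theorem \ref{Theorem2} (equivalently Proposition \ref{qigual0}(3), whose hypotheses you verify) applies with $H=L$ on the regular surface $X$. Your additional verification of conditions (1) and (2) of Theorem \ref{Theorem2} just makes explicit what the paper leaves implicit.
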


\begin{proof}
Note that $\text{deg}(L|_C)=L^2=2g-2\geq 2g-\text{Cliff}(C)$, thus
$M_{L\vert_C}$ is semi-stable.
\end{proof}

\vspace{.3cm}

Finally we address the case of $(-K_X)$-stability for del Pezzo
surfaces.

\begin{Theorem}\label{-KdelPezzo}
Let $X$ be a del Pezzo surface and let $L$ be a globally generated
line bundle on $X$. If $\vert L \vert$ contains an irreducible
curve, then the vector bundle $M_L$ is $(-K_X)$-stable.
\end{Theorem}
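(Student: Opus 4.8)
The strategy is to mimic the proof of Theorem~\ref{Theorem2}, but now with $H = -K_X$ instead of $H = L$. Since $X$ is del Pezzo, $-K_X$ is ample, so we may choose an irreducible non-singular curve $C \in |-K_X|$; by adjunction $C$ is an elliptic curve (genus $1$). Suppose $F \subset M_L$ is a sub-bundle with $\mu_{-K}(F) \geq \mu_{-K}(M_L)$. The key point is that the argument of Lemma~\ref{cotaminimaF} only used that $C$ is irreducible and non-singular (to run the multiplication-of-sections argument) and that $M_{L|_C}$ is semi-stable; neither required $C \in |L|$. So first I would verify that $M_{L|_C}$ is semi-stable on the elliptic curve $C$: this is exactly the content of the elliptic-curve computation already carried out inside the proof of Corollary~\ref{Cor1} (for $d = \deg L|_C = -L.K_X \geq 2$, which holds because $L$ is ample and $-K_X$ is generated). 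If $\deg L|_C = 1$ the bundle $M_{L|_C}$ has rank $0$ and the analysis degenerates, but in that case $h^0(L)$ is small and $M_L$ can be treated directly.

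**Main steps, in order.** First, fix $C \in |-K_X|$ irreducible non-singular of genus $1$ and a point $x \in C$, and record $\deg L|_C = L.(-K_X) = -L.K_X =: d$. Second, establish the diagram \eqref{eq:ES} with $H$ replaced by $-K_X$; for this I need $h^1(L + K_X) = 0$, which follows from Kodaira vanishing since $L$ is ample (so $L - K_X - K_X$... more simply: $h^1(L+K_X) = h^1(K_X - (-L)) $ — actually $h^1(L+K_X)=0$ by Kodaira vanishing applied to the ample bundle $L$). Third, the rank bound of Lemma~\ref{cotaminimaF} gives $\mathrm{rk}(F) \geq h^0(-K_X) - 1 + \mathrm{rk}(K_C)$, and the refined inequality $\mathrm{rk}(K) \geq h^0(L+K_X)\cdot \mathrm{rk}(F)/\mathrm{rk}(M_L)$ from Lemma~1.2 of \cite{lazarsfeldsurfaces} applies verbatim. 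Fourth, combine exactly as in Theorem~\ref{Theorem2} to get $\mathrm{rk}(F) \geq (h^0(-K_X)-1)\,\mathrm{rk}(M_L)/(\mathrm{rk}(M_L) - h^0(L+K_X))$, and note $\mathrm{rk}(M_L) - h^0(L+K_X) = \mathrm{rk}(M_{L|_C}) = h^0(L|_C) - 1 = d - 1$. So the needed contradiction $\mathrm{rk}(F) \geq \mathrm{rk}(M_L)$ reduces to the numerical inequality $h^0(-K_X) - 1 \geq d - 1 = -L.K_X - 1$, i.e.\ $h^0(-K_X) \geq -L.K_X$.

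**The main obstacle.** Unlike the case $L = H$, condition (2) of Theorem~\ref{Theorem2} is no longer automatic: I must prove $h^0(-K_X) \geq h^0(L|_C) = \deg L|_C = -L.K_X$ (using $h^0(L|_C) = \deg L|_C$ since $C$ is elliptic of positive degree). Equivalently, writing $L = -K_X + E$ for some effective-or-not divisor class $E$... this is where I expect the work to be. On a del Pezzo surface of degree $K_X^2 = 9 - r$ ($0 \leq r \leq 8$ points blown up, plus $\mathbb{P}^1\times\mathbb{P}^1$), one has $h^0(-K_X) = K_X^2 + 1$, so the inequality becomes $K_X^2 + 1 \geq -L.K_X$, i.e.\ $L.(-K_X) \leq (-K_X)^2 + 1$. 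This is \emph{not} true for arbitrary ample $L$ (take $L = -mK_X$ with $m$ large), so the proof must use more than just the numerics — presumably the hypothesis that $|L|$ contains an irreducible curve, bounding $L$ via adjunction and the structure of effective curves on del Pezzo surfaces, or else a different choice of sub-bundle estimate. I would first test small cases ($X = \mathbb{P}^2$, $\mathbb{P}^1 \times \mathbb{P}^1$, and one or two blow-ups) to see which inequality actually holds under the irreducibility hypothesis, then isolate the geometric input (likely: an irreducible curve $C'\in|L|$ has arithmetic genus $\tfrac12 L.(L+K_X)+1 \geq 0$, forcing $L.K_X \geq -L^2 - 2$, combined with Hodge-index-type bounds relating $L^2$, $L.K_X$ and $K_X^2$) that closes the gap. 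If the inequality $h^0(-K_X) \geq h^0(L|_C)$ genuinely fails in some cases, the fallback is to run the order-of-growth contradiction from Theorem~\ref{Main Theorem} directly, or to exploit that $M_L$ destabilized in $-K_X$-slope would also be destabilized in $L$-slope (already ruled out by Corollary~\ref{delPezzo}) together with a comparison of the two polarizations inside the ample cone.
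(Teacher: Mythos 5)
Your proposal does not reach a proof: it stalls exactly at the obstacle you yourself identify. Running the machinery of Theorem \ref{Theorem2} with $H=-K_X$ requires $h^0(-K_X)\ge h^0(L\vert_C)=-L.K_X$, and as you observe this fails already for $L=-mK_X$ with $m$ large, so the approach cannot work as set up. The fallbacks you list do not close the gap: the order-of-growth contradiction of Theorem \ref{Main Theorem} is intrinsically asymptotic and says nothing about a fixed $L$; slope-stability with respect to one polarization does not transfer to another, so ``destabilized in $-K_X$-slope implies destabilized in $L$-slope'' is not a valid implication (and Corollary \ref{delPezzo} moreover assumes $L$ ample, while here $L$ is only globally generated, a hypothesis you also overlook when you invoke Kodaira vanishing and write ``$L$ is ample''). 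So the proposal is an honest reduction to an inequality that is false in general, plus unverified escape routes — a genuine gap, not a proof.

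The paper's actual argument is different and much more direct, and it uses precisely the elliptic-curve computation you cite, but applied in another place. One restricts the \emph{dual} bundle $M_L^{\vee}$ (which is globally generated, being a quotient of $H^0(L)^{\vee}\otimes\mathcal{O}_X$) to a smooth elliptic curve $C\in\vert -K_X\vert$. Any torsion-free quotient $F$ of $M_L^{\vee}$ restricts to a globally generated bundle $F\vert_C$ with $h^1(F\vert_C)=0$, and the Riemann--Roch argument on the elliptic curve (the same one you quote from the proof of Corollary \ref{Cor1}) gives $\deg(F\vert_C)\ge \operatorname{rk}(F)+1$, hence
$\mu_{-K_X}(F)=\mu(F\vert_C)\ge 1+\tfrac{1}{\operatorname{rk}(F)}>1+\tfrac{2}{L^2-LK}$,
since $\operatorname{rk}(F)<\operatorname{rk}(M_L)=\tfrac{L^2-LK}{2}$. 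On the other hand $\mu_{-K_X}(M_L^{\vee})=\tfrac{-2LK}{L^2-LK}\le 1+\tfrac{2}{L^2-LK}$, the last inequality being adjunction ($L^2+LK\ge -2$) applied to the irreducible member of $\vert L\vert$ — this is exactly where that hypothesis enters, and it replaces the false inequality $h^0(-K_X)\ge -L.K_X$ that blocked your route. No analogue of Lemma \ref{cotaminimaF} or of condition (2) of Theorem \ref{Theorem2} is needed; if you want to salvage your restriction-to-$C$ idea, the missing move is to estimate quotients of $M_L^{\vee}\vert_C$ directly rather than to force the rank-counting scheme of Theorem \ref{Theorem2}.
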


\begin{proof}
Note that $h^1(L)=h^2(L)=0.$ From Riemann-Roch Theorem we get
\begin{eqnarray*}
h^0(L)=1+ \frac{L^2-L.K_X}{2}.
\end{eqnarray*}
Therefore the rank of $M_L$ is equal to $\frac{L^2-L.K_X}{2}$. We
want to prove that $M_L^{\vee}$ is stable with respect to the
polarization $-K_X$. Let $C$ be a non-singular projective curve in
the linear system $|-K_X|.$ By Adjunction Formula $C$ is an
elliptic curve. The slope of $M_L^{\vee}$ with respect to $-K_X$
is given by
\begin{eqnarray*}
\mu(M_L^{\vee})=-\frac{2L.K_X}{L^2-L.K_X}.
\end{eqnarray*}
Let $F$ be a torsion-free quotient sheaf of $M_L^{\vee}$ of rank
$0<\text{rk} F<r;$ then $F|_C$ is a quotient  of
$(M_L^{\vee})|_C$. We want to prove that
\begin{eqnarray*}
\mu(M_L^{\vee})<\mu(F).
\end{eqnarray*}
First we assume that $F|_C$  is a vector bundle  on $C$. The
following properties are satisfied:
    \begin{enumerate}
    \item[(i)] $H^1(C,F|_C)=0.$
    \item[(ii)] $\text{deg}(F|_C)\geq \text{rk}(F)+1.$
    \end{enumerate}

Indeed, since $F$ is globally generated, we have $F|_C$ is
globally generated, and therefore $h^0(F^{\vee}|_C)=0$  and
$h^1(F|_C)=0$ by Serre duality, this proves (i).  To prove (ii),
by Riemann Roch Theorem for curves, we get
\begin{eqnarray*}
h^0(F|_C)= \text{deg}(F|_C) +
\text{rk}(F|_C)(1-g(C))+h^1(F|_C)=\text{deg}(F|_C).
\end{eqnarray*}
 Since $F|_C$ is globally generated over $C$, it follows that $h^0(F|_C)\geq \text{rk}(F|_C)$. Assume that $h^0(F|_C)=\text{rk}(F|_C)$,
 then the evaluation map
 \begin{eqnarray*}
 H^0(F|_C)\otimes \mathcal{O}_C \to F|_C
 \end{eqnarray*}
is an isomorphism but this is impossible because $H^1(F|_C)=0.$
Therefore $\text{deg}(F|_C)\geq \text{rk}(F)+1$ and we obtain
$(ii).$

Hence, we have
\begin{eqnarray*}
\mu(F)=\mu(F|_C)\geq
1+\frac{1}{\text{rk}(F|_C)}>1+\frac{2}{L^2-L.K_X}
\end{eqnarray*}
therefore
\begin{eqnarray*}
\mu(M_L^{\vee})= -\frac{2L.K_X}{L^2-L.K_X} \leq
1+\frac{2}{L^2-L.K_X} < \mu(F),
\end{eqnarray*}
the first inequality follows by Adjunction Formula and the fact
that $\vert L \vert$ contains an irreducible curve. Hence
$\mu(M_L^{\vee})<\mu(F)$.

Now, if $F|_C$ is not a vector bundle, then $F|_C=E\oplus \tau$,
where $E$ is a vector bundle and $\tau$ is a torsion sheaf over
$C$. The above proof can be repeated to obtain
$\mu(M_L^{\vee})<\mu(E)$. Then
\begin{eqnarray*}
\mu(M_L^{\vee})<\mu(E)\leq \mu(E\oplus \tau)=\mu(F)
\end{eqnarray*}
Therefore $M_L^{\vee}$ is stable with respect to polarization
$-K_X$.
\end{proof}


\begin{thebibliography}{999}

\bibitem{leticiaynewstead}
Bhosle, U. N., Brambila-Paz, L. and Newstead, P. E. {\it On linear
series and a conjecture of D. C. Butler}. Internat. J. Math. 26
(2015), no. 2, 1550007, 18 pp.

\vspace{.1cm}

\bibitem{brenner}
Brenner, H. {\it Looking out for stable syzygy bundles}. With an
appendix by Georg Hein. Adv. Math. 219 (2008), no. 2, 401-427.
\vspace{.1cm}

\bibitem{butler}
Butler, D. C.  {\it Normal generation of vector bundles over a
curve},
 J. Differential Geom. 39, no. 1,(1994), 1-34.

\vspace{.1cm}

\bibitem{but}
Butler, D. C. {\it Birational maps of moduli of Brill--Noether
pairs}, arXiv:alg-geom/9705009.


\vspace{.1cm}
\bibitem{camerecurvas}
Camere, C. {\it About the stability of the tangent bundle
restricted to a curve}. C. R. Math. Acad. Sci. Paris 346 (2008),
no. 7-8, 421-426.

\vspace{.1cm}

\bibitem{camere}
Camere, C. {\it About the stability of the tangent bundle of $P^n$
restricted to a surface}. Math. Z. 271 (2012), no. 1-2, 499-507.

\vspace{.1cm}

\bibitem{CaucciLahoz2020} Caucci, F. and Lahoz, M. {\it Stability of syzygy bundles on an abelian
variety}. Bull. Lond. Math. Soc.
https://doi.org/10.1112/blms.12481.


\vspace{.1cm}
\bibitem{coanda}
 Coand\u{a}, I. {\it On the stability of syzygy bundles}. Internat. J. Math. 22 (2011), no. 4, 515-534.

\vspace{.1cm}
\bibitem{ein}
Ein, L. and Lazarsfeld, R. {\it Stability and restrictions of
Picard bundles, with an application to the normal bundles of
elliptic curves}, In: Ellingsrud, G., Peskine, C., Sacchiero, G.,
Stromme, S.A. (eds.) Complex Projective Geometry (Trieste
1989/Bergen 1989). LMS
  Lecture Note Series, vol. 179, pp. 149-156. CUP, Cambridge (1992).

\vspace{.1cm}
\bibitem{lazarsfeldsurfaces}
Ein, L., Lazarsfeld, R. and Mustopa, Y. {\it Stability of syzygy
bundles on an algebraic surface}. Math. Res. Lett. 20 (2013), no.
1, 73-80.

\vspace{.1cm}

\bibitem{flenner}
Flenner, H. {\it Restrictions of semistable bundles on projective
varieties}. Comment. Math. Helv. 59 (1984), no. 4, 635-650.

\bibitem{rosa}
 Macias Marques, P. and Mir\'o-Roig, R. M. {\it Stability of syzygy bundles}. Proc. Amer. Math. Soc. 139 (2011), no. 9, 3155-3170.

\vspace{.1cm}
\bibitem{mistretastopino}
 Mistretta, E. and  Stoppino, L. {\it Linear series on curves:
stability and Clifford index}. Internat. J. Math. 23 (2012), no.
12,  1250121, 25 pp.

\vspace{.1cm}
\bibitem{ram}
Paranjape, K. and Ramanan, S. {\it On the canonical ring of a
curve, In: Algebraic geometry and Commutative Algebra}, in Honor
of Masayoshi Nagata, vol. 2,  Kinokuniya (1987), 503-516.

\vspace{.1cm}
\bibitem{schneider}  Schneider, O. {\it Stabilit\'e des fibr\'es  $\wedge^p E_L$ et condition de Raynaud},
Fac. Sci. Toulouse Math. (6) 14(3), (2005), 515-525.

\end{thebibliography}
\end{document}